\def\@tocline#1#2#3#4#5#6#7{\relax
  \ifnum #1>\c@tocdepth 
  \else
    \par \addpenalty\@secpenalty\addvspace{#2}%
    \begingroup \hyphenpenalty\@M
    \@ifempty{#4}{%
      \@tempdima\csname r@tocindent\number#1\endcsname\relax
    }{%
      \@tempdima#4\relax
    }
    \parindent\z@ \leftskip#3\relax \advance\leftskip\@tempdima\relax
    \rightskip\@pnumwidth plus4em \parfillskip-\@pnumwidth
    #5\leavevmode\hskip-\@tempdima
      \ifcase #1
       \or\or \hskip 1em \or \hskip 2em \else \hskip 3em \fi%
      #6\nobreak\relax
    \dotfill\hbox to\@pnumwidth{\@tocpagenum{#7}}\par
    \nobreak
    \endgroup
  \fi}
\newcommand{\mb}{\mathbb}
\newcommand{\mf}{\mathfrak}
\newcommand{\on}{\operatorname}
\providecommand{\to}{\longrightarrow }
\newtheorem*{theorem*}{Theorem}
\newtheorem*{proposition*}{Proposition}
\newtheorem*{conj}{Question}
\newtheorem{lemma}{Lemma}
\begin{document}

\author[Neal Coleman]{Neal Coleman}

\title[Polya's eigenvalue conjecture is false for spheres]{Polya's eigenvalue conjecture is false for spheres}

\address{} 
\email{coleman.neal@gmail.com} 
\subjclass[2010]{Primary 58J50; Secondary 35P15}

\begin{abstract}
By comparing the Laplace spectrum of the sphere $\mathbb{S}^n$ to its Weyl function $w(x) = \frac{\omega_n}{(2\pi)^n}|\mb{S}^n|x^{n/2}$, we show that no analogue of Polya's eigenvalue conjecture holds in general for Riemannian manifolds with positive sectional curvature.
\end{abstract}
\maketitle


\thispagestyle{empty} 


\section*{Introduction}
Let $M$ be a compact Riemannian manifold of dimension $n$, possibly with boundary. Denote by $|M|$ the volume of $M$ with respect to the measure induced by the Riemannian metric.

Consider the Dirichlet eigenvalue problem for the Laplace operator $-\on{div}\on{grad}$. Its spectrum consists of finite multiplicity eigenvalues bounded below and accumulating to infinity. Denoted the eigenvalues by $\lambda_1\leq\lambda_2\leq\cdots$. Let $N(x) = \# \{ k\ |\ \lambda_k\leq x\}$ be the eigenvalue counting function.

Weyl's law describes the asymptotic growth of the counting function:
\[ N(x) = \frac{\omega_n}{(2\pi)^n}|M|x^{n/2} + R(x) = w(x) + R(x)\]
where $\omega_n$ is the volume of the unit ball in $\mb{R}^n$ and $R$ is a remainder term of order $o(x^{n/2})$. In what follows, for convenience we call $w(x) = \frac{\omega_n}{(2\pi)^n}|M|x^{n/2}$ the Weyl function of $M$.

The result was proved first by Weyl in 1911 \cite{Weyl1911}, and sharpened and generalized through the twentieth century by many authors using a variety of techniques. For more information see a survey by Ivrii \cite{Ivrii2016}.

P\'olya's conjecture states that for domains in $\mb{R}^n$,
\[ N < w \]
P\'olya originally conjectured this in 1954 \cite{Polya1954} and proved it in 1961 for domains that tile $\mb{R}^2$ \cite{Polya1961}; the argument generalizes immediately to any dimension.

This conjecture remains open in full generality. Advances have been made by Li and Yau \cite{LiYau1983}, Urakawa \cite{Urakawa1984}, and others. Recently, Levitin, Polterovich, and Sher have proven that this conjecture is true for the unit disk in $\mb{R}^2$ \cite{LevitinPolterovichSher2022}. This is the first non-tiling domain for which P\'olya's conjecture is known to hold.

McKean and Singer \cite{McKeanSinger1967} used a heat trace argument to show that for $M$ a closed manifold, Weyl's law satisfies
\[ N(x) = w(x) + \frac{\omega_{n-2}}{6(2\pi)^{n-1}} \bigg( \int_M K \bigg)\ x^{n/2 - 1} + R(x) \]
where $K$ is the scalar curvature of $M$ and $R$ is of order $o(x^{n/2 - 2})$.\footnote{McKean and Singer prove this for the heat trace. The heat trace is the Laplace transform of the eigenvalue counting function. The version here immediately follows from applying Feller's version of the Hardy-Littlewood Tauberian theorem.}

Based on this expansion of the counting function, one may ask whether some form of P\'olya's conjecture holds for closed Riemannian manifolds with nonzero scalar curvature:
\begin{conj}
If $M$ has positive (resp. negative) scalar curvature, is it the case that
\[ N > (\mbox{resp.} <)\ w ?\]
\end{conj}

We provide a counterexample:
\begin{theorem*}
Fix $n>0$. Let $N$ be the eigenvalue counting function of the round sphere $\mb{S}^n$ and $w$ its Weyl function. Then $N$ crosses $w$ infinitely many times and so Polya's eigenvalue conjecture does not hold.
\end{theorem*}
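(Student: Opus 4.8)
The plan is to make everything about $\mb{S}^n$ completely explicit and then compare the staircase $N$ with the continuous, strictly increasing function $w$ one plateau at a time.

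First I would record the data. The Laplace eigenvalues of the round $\mb{S}^n$ are $\lambda_k=k(k+n-1)$ for $k=0,1,2,\dots$, with multiplicity $m_k=\binom{n+k}{n}-\binom{n+k-2}{n}$, the dimension of the degree-$k$ spherical harmonics. The sum telescopes, so the constant value of $N$ on the plateau $[\lambda_k,\lambda_{k+1})$ is
\[
P(k):=\sum_{j=0}^{k}m_j=\binom{n+k}{n}+\binom{n+k-1}{n}=\frac{2k+n}{n!}\prod_{j=1}^{n-1}(k+j).
\]
Next I would evaluate the Weyl function: substituting $\omega_n=\pi^{n/2}/\Gamma(\tfrac n2+1)$ and $|\mb{S}^n|=2\pi^{(n+1)/2}/\Gamma(\tfrac{n+1}{2})$ into $w$ and reducing the $\Gamma$-factors via the Legendre duplication formula collapses the constant to $\tfrac{2}{n!}$, so
\[
w(x)=\frac{2}{n!}\,x^{n/2},\qquad\text{hence}\qquad w(\lambda_k)=\frac{2}{n!}\big(k(k+n-1)\big)^{n/2}.
\]

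With these formulas in hand, the crossing statement reduces to one two-sided inequality. Since $N\equiv P(k)$ on $[\lambda_k,\lambda_{k+1})$ while $w$ is continuous and strictly increasing, it suffices to show that for all sufficiently large $k$
\[
w(\lambda_k)\;<\;P(k)\;<\;w(\lambda_{k+1}).
\]
The left inequality gives $N-w>0$ at $x=\lambda_k$, the right one gives $N-w<0$ as $x\to\lambda_{k+1}^{-}$, and on the open plateau $N-w$ is continuous, so it vanishes there — indeed at the unique point $x_k=w^{-1}(P(k))$. Letting $k\to\infty$ produces infinitely many distinct crossings $x_k\to\infty$.

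Finally I would prove the sandwich inequality by matching the first two terms of the large-$k$ expansions. Expanding the product and the $\tfrac n2$-th power, $n!\,P(k)=2k^{n}+n^{2}k^{n-1}+O(k^{n-2})$, whereas $n!\,w(\lambda_k)=2k^{n}+n(n-1)k^{n-1}+O(k^{n-2})$ and $n!\,w(\lambda_{k+1})=2k^{n}+n(n+1)k^{n-1}+O(k^{n-2})$. Since $n(n-1)<n^{2}<n(n+1)$, the chain $w(\lambda_k)<P(k)<w(\lambda_{k+1})$ holds once $k$ is large (for $n\le 3$ one checks it for every $k\ge 0$; e.g.\ on $\mb{S}^{2}$, where $w(x)=x$ and $P(k)=(k+1)^{2}$, the graphs cross at every perfect square). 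The step carrying the content is this second-order comparison: the plateau height $P(k)$ has $k^{n-1}$-coefficient $n^{2}$, lying exactly halfway between the coefficients $n(n-1)$ and $n(n+1)$ of $w$ at the two ends of the plateau. Thus $N>w$ at every (large) eigenvalue, in accordance with the positive scalar curvature of $\mb{S}^{n}$ and the sign of the McKean--Singer correction, yet the jump of $w$ across a plateau is asymptotically twice the surplus $P(k)-w(\lambda_k)\sim\tfrac{n}{n!}k^{n-1}$, so $w$ overtakes $N$ before the next eigenvalue and P\'olya's inequality fails in both senses infinitely often. Beyond this, the argument is bookkeeping; the only real hazards are pinning the Weyl constant down exactly and carrying the expansions cleanly to order $k^{n-1}$.
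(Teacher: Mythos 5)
Your overall structure matches the paper exactly: both reduce to a sandwich inequality $w(v_k)<N(v_k)<w(v_{k+1})$ on the plateau $[v_k,v_{k+1})$ and then invoke continuity and monotonicity of $w$ to produce a crossing in each plateau. The place where you genuinely diverge is the proof of the sandwich inequality. The paper rewrites both sides as products of $n$ paired quadratics $k^2+nk+j(n-j)$ over $k^2+(n-1)k$ (respectively $k^2+(n+1)k+n$), and uses the elementary bound $j(n-j)\le n^2/4$ to show each factor exceeds or falls below $1$; this yields the first inequality for every $k\ge 1$ and the second for all $k>n^2/4-n$, with explicit thresholds. You instead expand to second order in $k$ and compare the $k^{n-1}$ coefficients: $n(n-1)<n^2<n(n+1)$. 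This is correct (I verified the three coefficients) and arguably more illuminating, since it makes transparent that the surplus $P(k)-w(\lambda_k)\sim\frac{n}{n!}k^{n-1}$ is exactly the McKean--Singer second term and is exactly half the jump $w(\lambda_{k+1})-w(\lambda_k)\sim\frac{2n}{n!}k^{n-1}$ — which is precisely why crossings are forced. The tradeoff is that you only get ``for $k$ sufficiently large,'' with no explicit threshold and the burden of estimating the $O(k^{n-2})$ remainders, whereas the paper's algebraic decomposition is both cleaner to make airtight and quantitatively sharper (it shows $N>w$ at every eigenvalue, a fact your sketch defers to a parenthetical). Both approaches fully suffice for the theorem as stated, since the theorem only requires infinitely many crossings. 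Your derivation of the Weyl constant via $\Gamma$-functions and Legendre duplication is also valid; the paper instead uses the recursion $\omega_n=\frac{2\pi}{n}\omega_{n-2}$ together with $\mathfrak{s}_n=2\pi\omega_{n-1}$, which avoids special-function identities.
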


\begin{figure}[h]
	\includegraphics[scale=0.75]{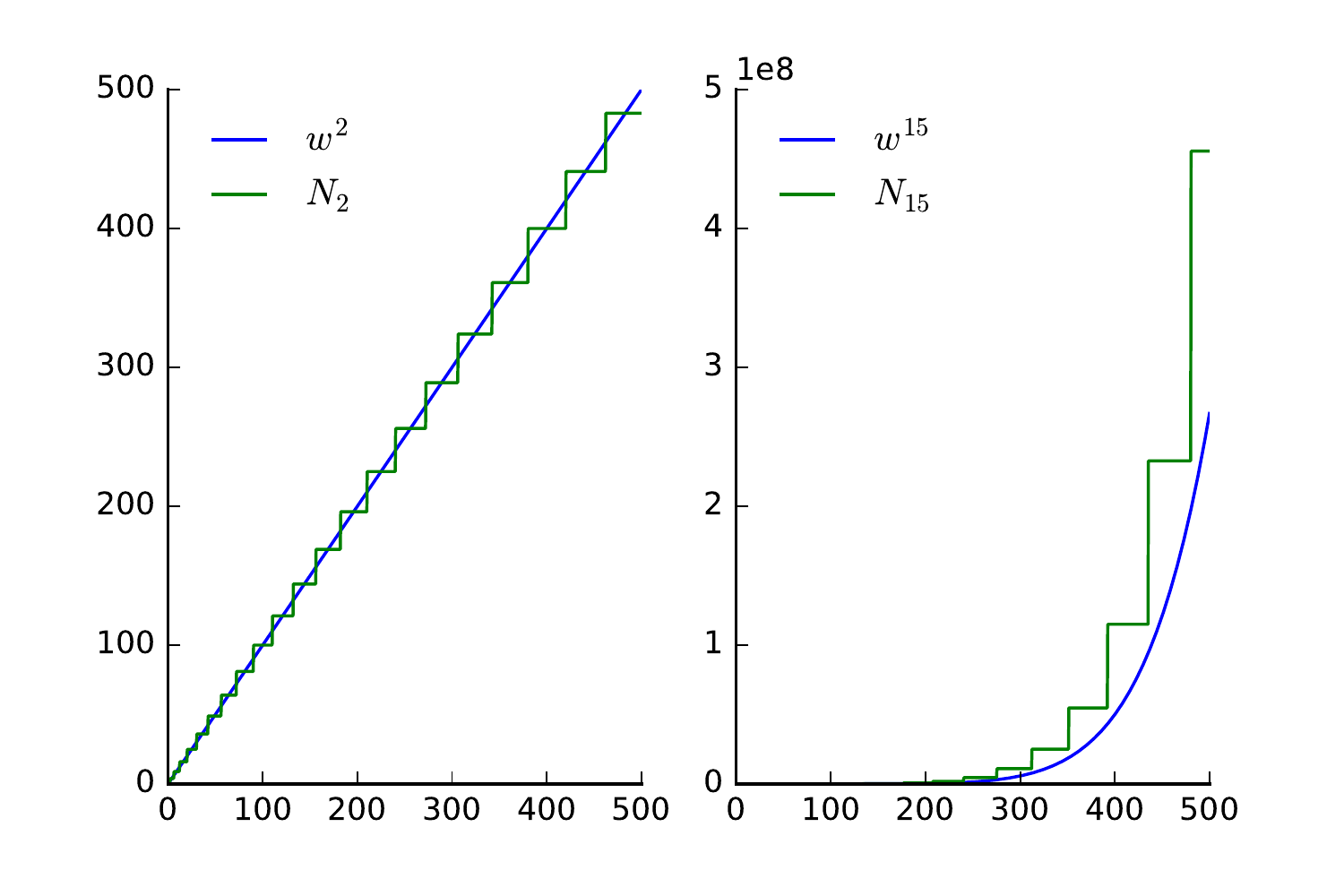}
	\caption[Weyl function and counting functions for $\mb{S}^2$ and $\mb{S}^{15}$]{We illustrate the Theorem by plotting the eigenvalue counting function and Weyl function for $\mb{S}^2$ (left) and $\mb{S}^{15}$ (right). (Note the vertical axis of the right plot is in units of $10^8$.) Figure first appeared in author's dissertation\cite{Coleman2017}.}\label{sphere_fig}
\end{figure}

\section{Laplace spectrum for round spheres}

Recall the following facts about round spheres $\mb{S}^n = \{x\in\mb{R}^{n+1}\ |\ |x|=1\}$ and their Laplace spectra. The Riemannian metric on $\mb{S}^n$ is induced by the Euclidean metric on $\mb{R}^{n+1}$. The eigenfunctions of the Laplacian on $\mb{S}^n$ are linear combinations of restrictions to the sphere of homogeneous harmonic polynomials on $\mb{R}^{n+1}$.

The distinct eigenvalues are $v_k = k(k+n-1)$. Their multiplicity\footnote{The dimension of homogeneous polynomials of degree $k$ is $\binom{k+n}{k}$ and the Laplace operator maps degree-$k$ homogeneous polynomials onto degree-$(k-2)$ homogeneous polynomials.} is 
\[
\mbox{multiplicity of}\ v_k = \begin{cases}
1, &k=0\\
n+1, &k=1\\
\binom{n+k}{k} - \binom{n+k-2}{k-2}, &k\geq 2
\end{cases}
\]

We prove two lemmas simplifying the sphere's counting function and its Weyl function.

\begin{lemma}[Eigenvalue counting function of $\mb{S}^n$]
The counting function of the sphere $\mb{S}^n$ is $N(x) = N\big( \{v_k\ |\ v_k \leq x \} \big)$. Adopting the convention that $\binom{m}{j} = 0$ for $j<0$, we have
\begin{align*} 
N(v_k) &= \binom{n+k}{k} + \binom{n+k-1}{k-1} \\
 	&= \frac{2}{n!}\bigg(k+\frac{n}{2}\bigg)(k+n-1)\cdots(k+1)
\end{align*}
\end{lemma}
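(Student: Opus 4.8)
The plan is to reduce the statement to elementary binomial-coefficient identities. First I would note that $v_k = k(k+n-1)$ is strictly increasing for $k \geq 0$, since $v_{k+1} - v_k = 2k+n > 0$; hence the distinct eigenvalues in increasing order are exactly $v_0 < v_1 < v_2 < \cdots$, and $N(v_k)$ is the sum of the multiplicities of $v_0, \dots, v_k$. Next I would observe that, with the convention $\binom{m}{j} = 0$ for $j < 0$, the three cases of the multiplicity formula collapse into the single expression $\binom{n+j}{j} - \binom{n+j-2}{j-2}$, valid for every $j \geq 0$: it gives $1 - 0 = 1$ when $j = 0$ and $(n+1) - 0 = n+1$ when $j = 1$. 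Therefore
\[ N(v_k) = \sum_{j=0}^{k}\left[\binom{n+j}{j} - \binom{n+j-2}{j-2}\right]. \]

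This sum telescopes. Reindexing the subtracted terms by $i = j-2$ and discarding the vanishing terms at $i = -2$ and $i = -1$ gives $\sum_{j=0}^{k}\binom{n+j-2}{j-2} = \sum_{i=0}^{k-2}\binom{n+i}{i}$, so all but the top two terms of $\sum_{j=0}^{k}\binom{n+j}{j}$ cancel, leaving $N(v_k) = \binom{n+k}{k} + \binom{n+k-1}{k-1}$, the first claimed identity. For the second identity I would expand both binomials as products of $n$ consecutive integers, $\binom{n+k}{k} = \frac{1}{n!}(k+1)(k+2)\cdots(k+n)$ and $\binom{n+k-1}{k-1} = \frac{1}{n!}\,k(k+1)\cdots(k+n-1)$, factor out the common block $(k+1)(k+2)\cdots(k+n-1)$, and combine the leftover factors via $(k+n)+k = 2k+n$, which yields $N(v_k) = \frac{2}{n!}\big(k+\frac{n}{2}\big)(k+n-1)\cdots(k+1)$.

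I do not expect a genuine obstacle; the calculation is routine once the multiplicities are in hand. The only points needing care are checking that the unified multiplicity formula reproduces the exceptional cases $k = 0$ and $k = 1$, and keeping track of which boundary terms vanish when the telescoping sum is reindexed.
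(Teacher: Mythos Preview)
Your proposal is correct and follows essentially the same approach as the paper: sum the multiplicities, telescope to obtain $\binom{n+k}{k}+\binom{n+k-1}{k-1}$, then expand both binomials and factor out the common block $(k+1)\cdots(k+n-1)$. You are simply a bit more explicit about the monotonicity of $v_k$, the small-$j$ checks, and the reindexing in the telescoping step.
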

\begin{proof}
The value of the counting function at a point $x$ is equal to the sum of the multiplicities of the eigenvalues less than or equal to $x$. The sum telescopes:
\begin{align*}
N_n(k(k+n-1)) &= \sum_{j\leq k} \binom{n+j}{j} - \binom{n+j-2}{j-2} \\
    &= \binom{n+k}{k} + \binom{n+k-1}{k-1}
\end{align*}

Expanding yields:
\begin{align*}
\binom{k+n}{k} + \binom{k+n-1}{k-1} &= \frac{1}{n!}(k+n)(k+n-1)\cdots(k+1)\\ &\ \ \ \ + \frac{1}{n!}(k+n-1)(k+n-2)\cdots k \\
	&= \frac{1}{n!}(k+n-1)\cdots (k+1)(k+n+k) \\
	&= \frac{2}{n!}\bigg(k+\frac{n}{2}\bigg)(k+n-1)\cdots(k+1)
\end{align*}
as claimed.
\end{proof}

\begin{lemma}[Sphere Weyl function]
The Weyl function of $\mb{S}^n$ is
\[ w(x) = \frac{2}{n!}x^\frac{n}{2} \]
\end{lemma}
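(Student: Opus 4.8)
The plan is to reduce the stated identity to a standard evaluation of Gamma-function values. Unwinding the definition of the Weyl function, $w(x) = \frac{\omega_n}{(2\pi)^n}|\mb{S}^n|\,x^{n/2}$, so it suffices to prove the purely numerical identity $\frac{\omega_n}{(2\pi)^n}|\mb{S}^n| = \frac{2}{n!}$, equivalently $\omega_n |\mb{S}^n| = \frac{2(2\pi)^n}{n!}$.

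First I would substitute the classical closed forms $\omega_n = \frac{\pi^{n/2}}{\Gamma(\frac n2+1)}$ for the volume of the unit ball in $\mb{R}^n$ and $|\mb{S}^n| = \frac{2\pi^{(n+1)/2}}{\Gamma(\frac{n+1}{2})}$ for the surface measure of the unit $n$-sphere. Multiplying these gives
\[ \omega_n |\mb{S}^n| = \frac{2\,\pi^{n+1/2}}{\Gamma\!\big(\tfrac n2+1\big)\,\Gamma\!\big(\tfrac{n+1}{2}\big)}. \]

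The key step is to simplify the product of Gamma values in the denominator. Since $\Gamma(\tfrac n2+1) = \Gamma(\tfrac{n+2}{2})$, the denominator equals $\Gamma(\tfrac{n+1}{2})\,\Gamma(\tfrac{n+1}{2}+\tfrac12)$, which is precisely the left-hand side of Legendre's duplication formula $\Gamma(z)\Gamma(z+\tfrac12) = 2^{1-2z}\sqrt{\pi}\,\Gamma(2z)$ evaluated at $z=\tfrac{n+1}{2}$. This yields $\Gamma(\tfrac n2+1)\,\Gamma(\tfrac{n+1}{2}) = 2^{-n}\sqrt{\pi}\,\Gamma(n+1) = 2^{-n}\sqrt{\pi}\,n!$. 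Substituting back gives $\omega_n|\mb{S}^n| = \frac{2\pi^{n+1/2}}{2^{-n}\sqrt{\pi}\,n!} = \frac{2(2\pi)^n}{n!}$, which is the claim.

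I do not expect a genuine obstacle here; the only mild care needed is keeping the parity of $n$ from causing confusion, which the uniform Gamma-function formulation sidesteps entirely. As an alternative that avoids special functions, one could instead prove $\omega_n|\mb{S}^n| = 2(2\pi)^n/n!$ by induction on $n$ in steps of two: using $|\mb{S}^n| = (n+1)\omega_{n+1}$ together with the recursion $\omega_k = \frac{2\pi}{k}\omega_{k-2}$, the ratio of successive step-two terms works out to $\frac{1}{n(n-1)}$, matching $\frac{2/n!}{2/(n-2)!}$, with base cases $n=0$ (where $\omega_0 = 1$, $|\mb{S}^0| = 2$) and $n=1$ (where $\omega_1 = 2$, $|\mb{S}^1| = 2\pi$).
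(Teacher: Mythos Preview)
Your proof is correct. The Gamma-function computation is clean, and the application of Legendre's duplication formula at $z=\tfrac{n+1}{2}$ is exactly the right move to collapse the product $\Gamma(\tfrac{n+1}{2})\Gamma(\tfrac{n}{2}+1)$ into $2^{-n}\sqrt{\pi}\,n!$.

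The paper takes a different, more elementary route: rather than invoking closed-form Gamma expressions, it works directly with the product $\omega_n\mathfrak{s}_n$ (writing $\mathfrak{s}_n=|\mathbb{S}^n|$) and uses the pair of recursions $\omega_n=\tfrac{1}{n}\mathfrak{s}_{n-1}$ and $\mathfrak{s}_n=2\pi\,\omega_{n-1}$ to obtain the step-one recursion $\omega_n\mathfrak{s}_n=\tfrac{2\pi}{n}\,\omega_{n-1}\mathfrak{s}_{n-1}$, from which $\omega_n\mathfrak{s}_n=2(2\pi)^n/n!$ follows by a single induction from the base case $\omega_0\mathfrak{s}_0=2$. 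This avoids special functions entirely and needs only one base case rather than the two your step-two alternative requires. Your Gamma-function approach, on the other hand, is parity-agnostic from the start and makes the identity a direct consequence of a standard special-functions fact, which some readers may find more transparent. Both arguments are short; the paper's is marginally more self-contained, yours more systematic.
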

\begin{proof}
Recall $\omega_n$ denotes the volume of the unit ball in $\mb{R}^n$. Denote by $\mf{s}_n$ the Riemannian volume of $\mb{S}^n$. Note that $\omega_0 = 1$ and $\mf{s}_0 = 2$. 

The following recursive relation is established by integrating in spherical and toroidal coordinates, respectively:
\[ \begin{cases}  
\omega_n &= \frac{1}{n}\omega_{n-1} \\
\mf{s}_n &= 2\pi\omega_{n-1} \\
\end{cases} \]
By the recursive relation we observe that $\omega_n\mf{s}_n = \frac{2\pi}{n}\omega_{n-1}\mf{s}_{n-1}$. Inductively we have 
\[\omega_n\mf{s}_n = \frac{(2\pi)^n}{n!}\omega_0\mf{s}_0 = 2\frac{(2\pi)^n}{n!}. \]

As the sphere's Weyl function is
\[ w(x) = \frac{\omega_n}{(2\pi)^n}|\mb{S}^n|x^{n/2} = \frac{\omega_n\mf{s}_n}{(2\pi)^n}|\mb{S}^n|x^{n/2} \]
substitution in the numerator yields the claim.
\end{proof}

\section{Proof of theorem}

We establish the Theorem by proving the following
\begin{proposition*}
For all $k$, \[ w(v_k) < N(v_k). \]
For sufficiently large $k$,
\[ N(v_k) < w(v_{k+1}) \]
\end{proposition*}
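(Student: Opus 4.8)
The plan is to substitute the two closed forms established above and reduce each of the two inequalities to an elementary comparison of products of linear factors. Throughout, write $v_k = k(k+n-1)$, so the first lemma gives $N(v_k) = \frac{2}{n!}\bigl(k+\frac n2\bigr)\prod_{j=1}^{n-1}(k+j)$ and the second gives $w(v_k) = \frac{2}{n!}v_k^{n/2}$. The common factor $\frac{2}{n!}$ cancels in every comparison, so everything comes down to estimating $\prod_{j=1}^{n-1}(k+j)$ against powers of $v_k^{1/2}$ and of $k+\frac n2$.

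For the first inequality I would exploit the palindromic pairing $j \leftrightarrow n-j$ of the factors: reindexing shows $\bigl(\prod_{j=1}^{n-1}(k+j)\bigr)^2 = \prod_{j=1}^{n-1}(k+j)(k+n-j)$, and each term satisfies $(k+j)(k+n-j) = k^2+nk+j(n-j) \ge k^2+(n-1)k = v_k$ (with room to spare, since $k+j(n-j)>0$), whence $\prod_{j=1}^{n-1}(k+j) \ge v_k^{(n-1)/2}$. It then suffices to check $k+\frac n2 > v_k^{1/2}$, and indeed $\bigl(k+\frac n2\bigr)^2 - v_k = k+\frac{n^2}{4} > 0$. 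Multiplying gives $N(v_k) \ge \frac{2}{n!}\bigl(k+\frac n2\bigr)v_k^{(n-1)/2} > \frac{2}{n!}v_k^{n/2} = w(v_k)$.

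For the second inequality I would run the same pairing in the opposite direction: $(k+j)(k+n-j) = \bigl(k+\frac n2\bigr)^2 - \bigl(j-\frac n2\bigr)^2 \le \bigl(k+\frac n2\bigr)^2$, so $\prod_{j=1}^{n-1}(k+j) \le \bigl(k+\frac n2\bigr)^{n-1}$ and hence $N(v_k) \le \frac{2}{n!}\bigl(k+\frac n2\bigr)^n$. Since $w(v_{k+1}) = \frac{2}{n!}\bigl((k+1)(k+n)\bigr)^{n/2}$, it is enough to show $\bigl(k+\frac n2\bigr)^2 < (k+1)(k+n)$; expanding, this is $nk+\frac{n^2}{4} < (n+1)k+n$, i.e.\ $k > \frac{n^2}{4}-n = \frac{n(n-4)}{4}$, which holds for all sufficiently large $k$ — in fact for every $k \ge \lfloor n(n-4)/4\rfloor+1$, so for all $k\ge 1$ as soon as $n\le 4$.

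The arguments are short enough that the ``main obstacle'' is really just the observation that makes them short: the single pairing $j \leftrightarrow n-j$ simultaneously yields the lower bound $v_k^{(n-1)/2}$ (via $j(n-j)\ge 0$) needed for $N>w$ and the upper bound $\bigl(k+\frac n2\bigr)^{n-1}$ (via $(j-\frac n2)^2\ge 0$) needed for $N(v_k)<w(v_{k+1})$; after that each inequality is a one-line quadratic comparison. The only bookkeeping point is that the low-degree multiplicity anomalies ($k=0,1$) are already absorbed into the uniform closed form of the counting-function lemma, so no separate small-$k$ analysis is needed.
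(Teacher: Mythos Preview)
Your proof is correct and essentially identical to the paper's: both exploit the pairing $j\leftrightarrow n-j$ to compare $(k+j)(k+n-j)$ against $v_k$, $(k+\tfrac n2)^2$, and $(k+1)(k+n)$, and both arrive at the same threshold $k>\tfrac{n^2}{4}-n$ for the second inequality. The only difference is cosmetic: the paper phrases each step as ``every factor in a certain ratio is $<1$ (resp.\ $>1$)'', whereas you bound the product $\prod_{j=1}^{n-1}(k+j)$ directly between $v_k^{(n-1)/2}$ and $(k+\tfrac n2)^{n-1}$ before multiplying by the remaining factor.
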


The first statement in this Proposition was proven in the author's dissertation\cite{Coleman2017}.

Because $N$ is constant on the interval $(v_k, v_{k+1})$, the second inequality implies that 
\[ \lim_{\epsilon\to 0} N(v_{k+1}-\epsilon) < w(v_{k+1}) \]
for sufficiently large $k$.

As $w$ and $N$ are both continuous on the interval $(v_k, v_{k+1})$, they must have crossed: the Theorem follows from the proposition combined with the intermediate value theorem.

\begin{proof}
We first prove that for all $k$, we have $w(v_k) < N(v_k)$. By Lemmas 1 and 2, this is equivalent to:
\[\frac{2}{n!}\big(k(k+n-1)\big)^{n/2} < \frac{2}{n!}\bigg(k+\frac{n}{2}\bigg)(k+n-1)\cdots(k+1).\]

Canceling the factor of $2/n!$ and squaring both sides, the assertion holds if and only if
\[ \bigg[k(k+n-1)\bigg]^n < \bigg(k+\frac{n}{2}\bigg)^2\big(k+n-1\big)^2\cdots (k+1)^2 \]
As each factor is nonzero, this is true if and only if
\[ \frac{k(k+n-1)}{(k+n/2)^2}\prod_{j=1}^{n-1} \frac{k(k+n-1)}{(k+j)(k+n-j)} < 1. \]
The result follows from observing that $j(n-j) \leq \frac{n^2}{4}$ for $j=1,2,\ldots,n-1$ and expanding numerators and denominators:
\[ k^2 + (n-1)k < k^2 + nk + j(n-j) \leq k^2 + nk + \frac{n^2}{4} \]
Each factor in the product is less than one. Therefore the entire product is less than one, establishing the inequality.

We now prove the second inequality. Again applying Lemmas 1 and 2, the assertion that $w(v_{k+1}) > N(v_k)$ is equivalent to
\[ \left[ (k+1)(k+n-1) \right]^n > \bigg(k+\frac{n}{2}\bigg)^2\big(k+n-1\big)^2\cdots (k+1)^2 \]

Analogous to above, this holds if and only if
\[ \frac{(k+1)(k+n)}{(k+n/2)^2}\prod_{j=1}^{n-1} \frac{(k+1)(k+n)}{(k+j)(k+n-j)} > 1 \]

Suppose $k > \frac{n^2}{4} - n.$ Then for all $j=1,\ldots, n-1$, we have
\[ k^2 + (n+1)k + n > k^2 + nk + \frac{n^2}{4} \geq k^2 + (n+1)k + j(n-j) \]
Each factor in the product is no less than $1$, thus establishing the desired inequality.
\end{proof}

This result is not sharp. For sufficiently large $n$, numerical experiments indicate a intermediate regime where $N > w$. See the example of $\mb{S}^{15}$ in Figure \ref{sphere_fig}.


\providecommand{\bysame}{\leavevmode\hbox to3em{\hrulefill}\thinspace}
\providecommand{\MR}{\relax\ifhmode\unskip\space\fi MR }
\providecommand{\MRhref}[2]{%
  \href{http://www.ams.org/mathscinet-getitem?mr=#1}{#2}
} 
\providecommand{\href}[2]{#2}

\end{document}